\documentclass[a4,12pt]{amsart}       

\usepackage[utf8]{inputenc}
\usepackage[T1]{fontenc}
\usepackage{yfonts}
\usepackage[english]{babel}
\usepackage{lipsum}
\usepackage{amsmath}
\usepackage{amsthm}
\usepackage{amssymb}

\usepackage[shortlabels]{enumitem}
\usepackage{graphicx}
\usepackage{mathtools}
\usepackage{hyperref}
\usepackage{amsfonts}
\usepackage{latexsym}
\usepackage{amscd}
\usepackage[dvipsnames]{xcolor}
\usepackage{tikz-cd}

\usepackage[all]{xy}

\usepackage{stmaryrd}

\DeclareMathOperator{\Imm}{Im}

\DeclareMathOperator{\Path}{Path}

\def\Path{\text{Path}}

\def\dualita#1#2{\mathrel{
                 \mathop{\vcenter{
                 \offinterlineskip
                 \hbox to 1.2truecm{$\mapsto$}
                 \hbox to 1.2truecm{$\mapsfrom$}}}%
                 }}
\usepackage{aliascnt}

\newaliascnt{lemma}{theorem}
\newtheorem{lemma}[lemma]{Lemma}
\aliascntresetthe{lemma}

\newaliascnt{proposition}{theorem}
\newtheorem{proposition}[proposition]{Proposition}
\aliascntresetthe{proposition}

\newaliascnt{corollary}{theorem}

\aliascntresetthe{corollary}

\newaliascnt{definition}{theorem}

\aliascntresetthe{definition}

\newaliascnt{remark}{theorem}
\newtheorem{remark}[remark]{Remark}
\aliascntresetthe{remark}
\newaliascnt{example}{theorem}
\newtheorem{example}[example]{Example}
\aliascntresetthe{example}

\usepackage{cleveref}
\crefname{theorem}{theorem}{theorems}
\Crefname{theorem}{Theorem}{Theorems}

\crefname{lemma}{lemma}{lemmas}
\Crefname{lemma}{Lemma}{Lemmas}

\crefname{proposition}{proposition}{propositions}
\Crefname{proposition}{Proposition}{Propositions}

\crefname{corollary}{corollary}{corollaries}
\Crefname{corollary}{Corollary}{Corollaries}

\crefname{definition}{definition}{definitions}
\Crefname{definition}{Definition}{Definitions}

\crefname{remark}{remark}{remarks}
\Crefname{remark}{Remark}{Remarks}

\crefname{example}{example}{examples}
\Crefname{example}{Example}{Examples}

\begin{document}
\title[divisible non-injective]{On the existence of divisible non-injective modules over Leavitt path algebras}
\author{Mara Barban}
\address{Dipartimento di Matematica ``Tullio Levi-Civita'', Universit\`{a} degli Studi di Padova, I-35121, Padova, Italy, Orcid https://orcid.org/0009-0006-3327-5504}
\email{mara.barban@phd.unipd.it}
\author{Alberto Tonolo}
\address{Dipartimento di Matematica ``Tullio Levi-Civita'', Universit\`{a} degli Studi di Padova, I-35121, Padova, Italy, Orcid https://orcid.org/0000-0002-9844-3998}
\email{alberto.tonolo@unipd.it}
\subjclass{16S88, 16D50}

\begin{abstract}
Let $K$ be any field and $E$ any directed graph.  
We investigate the existence of divisible non-injective left modules over the Leavitt path algebra $L_K(E)$ for a broad class of graphs. Our main tool is a general criterion for divisibility that is particularly well suited to our setting.
\end{abstract}
\maketitle

\section*{Introduction}

The algebras now known as \emph{Leavitt path algebras} were introduced in 2004 by Ara, Moreno, and Pardo \cite{AMP07}, and, almost simultaneously and through a different approach, by Abrams and Aranda Pino \cite{AA05}. Over the last two decades, they have attracted considerable interest not only among ring theorists, but also among researchers working in $C^*$-algebras, group theory, and symbolic dynamics. Their construction belongs to the long-standing tradition of associating algebraic objects with suitable combinatorial data: in this case, a directed graph. We refer the reader to the monograph \cite{AAM} for the basic definitions, the principal structural results, and a comprehensive introduction to the subject. The importance acquired by the theory is also reflected in the Mathematics Subject Classification, where Leavitt path algebras are assigned the specific code 16S88.

A particularly active direction of research concerns the module theory of Leavitt path algebras. Initiated in \cite{ArBr10}, this line of investigation has subsequently developed through the work of Chen, Ara, Anh, Nam, Rangaswamy, Abrams, Mantese, and the second author, among others (e.g., see \cite{Ch12, AR14, AMT15, AMT19, AMT21, AMT24, AN21, AMT26}). A substantial part of the literature has focused on the classification and structure of simple modules, as well as on the description of their injective envelopes. These questions reveal the extent to which the combinatorial properties of the underlying graph influence the homological and module-theoretic behaviour of the associated algebra.

The aim of the present work is to compare two natural classes of left modules over Leavitt path algebras: injective left modules and divisible left modules. In general, divisibility is a weaker condition than injectivity, and the two notions need not agree. Their comparison is especially meaningful in the present setting because Leavitt path algebras are hereditary rings \cite{ArG12}. This homological property places them in a particularly delicate position for studying the relationship between divisibility and injectivity. Indeed, for an associative ring without zero divisors, the assertion that divisible modules coincide with injective modules is equivalent to the left hereditariness of the ring.

\section{The problem}

Let $R$ be an arbitrary associative ring.
A left $R$-module $M$ is \emph{injective} \cite[Baer's criterion 3.7]{Lam0} (resp. \emph{divisible} \cite[Proposition 3.17]{Lam0})
if for any left ideal (resp. left principal ideal) $J$ of $R$, and any homomorphism $\varphi:J\to M$ of left $R$-modules there exists a homomorphism of left $R$-modules $\psi:R\to M$ such that the following diagram commutes:
\[
\xymatrix{J\ar[d]_\varphi\ar@{^(->}[r]& R\ar[dl]^\psi\\
M}
\]
Clearly, every injective module is divisible. The converse is immediate when $R$ is a \emph{principal left ideal ring}; however, it fails in general. For example, if $R$ is a domain (not necessarily commutative), every divisible left $R$-module is injective if and only if $R$ is left hereditary \cite[Corollary 3.23]{Lam0}.

In this paper, we want to compare the two notions of injective and divisible left module over Leavitt path algebras.

If $F$ is an acyclic finite graph, then, trivially, no cycle has an exit, i.e., $F$ satisfies Condition (NE). 
By \cite[Theorem 4.2.17]{AAM} $L_K(F)$ is left Noetherian, and hence every left ideal in $L_K(F)$ is finitely generated. Since Leavitt path algebras are B\'ezout rings \cite{AMT18}, we find that every left ideal in $L_K(F)$ is principal. Thus, finite acyclic graphs cannot provide examples of divisible non injective modules.

Nevertheless, such examples may occur among infinite acyclic graphs.

    Consider the infinite Kronecker graph, i.e., the graph $\mathfrak K_\infty$ with two vertices $u$ and $w$, and infinitely many edges $e_i$, $i\geq 1$, with source $u$ and range $w$
\begin{center}
\begin{tikzpicture}[
  >=Stealth,
  line width=0.9pt,
  vertex/.style={inner sep=1.2pt,outer sep=1pt},
  bigloopabove/.style={
    loop above,
    out=130,
    in=50,
    min distance=15mm,
    looseness=4
  },
  edgelabel/.style={
    midway,
    fill=white,
    inner sep=1.2pt,
    font=\footnotesize
  }
]

\node[vertex] (u) at (0,0) {$u$};
\node[vertex] (w) at (4,0) {$w$};

\draw[->] (u) to[bend left=48]
  node[edgelabel,pos=0.50,sloped,inner xsep=1.9pt] {$e_1$} (w);

\draw[->] (u) to[bend left=22]
  node[edgelabel,pos=0.50,sloped,inner xsep=1.9pt] {$e_2$} (w);

\node[fill=white,inner sep=-1.1pt] at (2,0.10) {$\vdots$};

\draw[->] (u) to[bend right=22]
  node[edgelabel,pos=0.50,sloped,inner xsep=1.9pt] {$e_n$} (w);

\node[fill=white,inner sep=-1.1pt] at (2,-0.85) {$\vdots$};

\end{tikzpicture}
\end{center}
By \cite[Theorem 3.4.1]{AAM}, $L_K(\mathfrak K_\infty)$ is von Neumann, and 
hence by \cite[Proposition 3.18]{Lam0} every left $R$-module is divisible: in particular the simple (see \cite[Theorem 3.5]{Ch12}) left ideal $L_K(\mathfrak K_\infty)w$ is divisible. Nevertheless, it is not injective: its injective envelope is the left $L_K(\mathfrak K_\infty)$-module
\[K[[\text{Path}(\mathfrak K_\infty)w]]=\{k_0w+k_1e_1+k_2e_2+\cdots+k_ne_n+\cdots: k_i\in K\}
\]
of the formal series of paths in $\mathfrak K_\infty$ ending in $w$
(see \cite[Corollary 4.6]{AMT26}).

The aim of this paper is to construct a large family of finite graphs admitting divisible non injective left modules over the associated Leavitt path algebra. Notice that, as shown above, these finite graphs necessarily have to contain cycles.

\section{A localisation and an ``ad hoc'' divisibility criterion}

Let $K$ be a field. The set $\mathfrak M$ of polynomials $p(x)\in K[x]$ such that $p(0)\not=0$ is a multiplicative subset of $K[x]$. We can therefore consider the localisation of $K[x]$ at $\mathfrak M$:
\[
K[x]_{\mathfrak M}=\left\{\dfrac{f(x)}{g(x)}\mid f(x), g(x)\in K[x], g(0)\not=0\right\}/\approx
\]
where given $f_1(x), f_2(x)\in K[x]$, and $g_1(x), g_2(x)\in \mathfrak M$
\[
\frac{f_1(x)}{g_1(x)}\approx \frac{f_2(x)}{g_2(x)}\text{ if and only if }f_1(x)g_2(x)=f_2(x)g_1(x) \text{ in }K[x].
\]
Since any polynomial $p(x)$ with $p(0)\not=0$ is invertible in the $K$-algebra of the formal series $K[[x]]$, we can identify $K[x]_{\mathfrak M}$ with a subalgebra of $K[[x]]$. After this identification, it is well known that $K[x]_{\mathfrak M}$ is properly contained in $K[[x]]$: e.g., the generating function of Catalan numbers 
\[
A(x)=\sum_{n=0}^\infty \frac 1{n+1}\binom{2n}nx^n
\]
satisfies the functional equation $A(x)=1+x A^2(x)$ and, therefore, does not belong to $K[x]_{\mathfrak M}$.
Therefore, we have the following strict inclusions of $K$-algebras:
\[
K[x]\subsetneq K[x]_{\mathfrak M}\subsetneq K[[x]].
\]

Consider now any associative ring $R$ with $1\not=0$.
Let us prove the following ``ad hoc'' divisibility criterion for left $R$-modules.

\begin{lemma}\label{Steps}
    Let $R$ be an associative ring and $M$ be a left $R$-module. Then $M$ is divisible if and only if there exists a left ideal $I\leq R$ such that:
    \begin{enumerate}
        \item For every left $R$-ideal $J\leq I$, every left $R$-homomorphism $J\to M$ extends to a left $R$-homomorphism $I\to M$.
        \item For every $r\in I$, every left $R$-homomorphism $Rr\to M$ extends to a left $R$-homomorphism $R\to M$.
        \item For every $r\in R\setminus I$, every left $R$-homomorphism $Rr+I\to M$ extends to a left $R$-homomorphism $R\to M$.
    \end{enumerate}
\end{lemma}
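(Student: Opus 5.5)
One direction is immediate. If $M$ is divisible, I take $I=R$. Condition (2) is then exactly the definition of divisibility, with principal left ideals $Rr$, $r\in R$. Condition (3) is vacuous, because $R\setminus I=\emptyset$. Condition (1) is the only one that needs a moment's thought: for $I=R$ it would say that every homomorphism $J\to M$ from an arbitrary left ideal extends to $R$, and that is injectivity, not divisibility. So the choice $I=R$ does not work as it stands, and I would instead take $I=0$. Then (1) concerns only $J=0$ and the zero map, which trivially extends. Condition (2) concerns only $r=0$ and is again trivial. Condition (3) asks that for every $r\neq 0$ each homomorphism $Rr+0=Rr\to M$ extends to $R$, which is precisely divisibility. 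So $I=0$ witnesses the three conditions.

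For the converse, fix $I$ satisfying (1)--(3), take $r\in R$ and a homomorphism $\varphi\colon Rr\to M$; I must extend $\varphi$ to $R$. I split into two cases according to whether $r$ lies in $I$.

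\emph{Case $r\in I$.} Condition (2) gives the extension directly.

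\emph{Case $r\notin I$.} The plan is to first extend $\varphi$ to $Rr+I$, and then apply (3). Set $J=Rr\cap I$, which is a left ideal contained in $I$. By (1), the restriction $\varphi|_J$ extends to a homomorphism $\chi\colon I\to M$. Define $\theta\colon Rr+I\to M$ by
\[
\theta(ar+i)=\varphi(ar)+\chi(i).
\]
To see that $\theta$ is well defined, suppose $ar+i=a'r+i'$. Then $(a-a')r=i'-i$ lies in $J$, so
\[
\varphi\bigl((a-a')r\bigr)=\chi(i'-i),
\]
because $\chi$ agrees with $\varphi$ on $J$; rearranging gives $\varphi(ar)+\chi(i)=\varphi(a'r)+\chi(i')$. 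The map $\theta$ is clearly $R$-linear, and it restricts to $\varphi$ on $Rr$. Condition (3) then extends $\theta$, and hence $\varphi$, to a homomorphism $R\to M$.

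Therefore $M$ is divisible. The only real obstacle is the well-definedness of the glued map $\theta$ on $Rr+I$. This is the usual pushout argument, and it works because (1) provides an extension to $I$ that is compatible with $\varphi$ on the intersection $Rr\cap I$. The forward direction only requires choosing the trivial witness $I=0$ rather than $I=R$.
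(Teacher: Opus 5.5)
Your proof is correct and follows the paper's argument: you use the trivial witness $I=0$ for the forward direction, and for the converse you split on $r\in I$ or $r\notin I$, extend $\varphi|_{Rr\cap I}$ to $I$ via (1), glue to $Rr+I$, and finish with (3). The only change worth making is to delete the false start with $I=R$, which, as you yourself note, would demand injectivity rather than divisibility.
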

\begin{proof}
Clearly, if $M$ is divisible, then $I=0$ satisfies the three properties. Conversely, assume there exists $I$ satisfying properties (1-3).
    Let $s\in R$ and $\varphi:Rs\to M$ be a left $R$-homomorphism. If $s\in I$ we conclude by (2). Otherwise, let $s\notin I$. By assumption (1) $\varphi_{\mid Rs\cap I}:Rs\cap I\to M$ extends to $\psi:I\to M$. Setting $\hat\psi(i+rs):=\psi(i)+\varphi(rs)$ for each $i\in I$, $r\in R$, we define a map $\hat\psi: I+Rs\to M$. It is well defined: if $i_1+r_1s=i_2+r_2s$, then
    $i_1-i_2=r_2s-r_1s\in I\cap Rs$ and 
    \[
    \psi(i_1)-\psi(i_2)=\psi(i_1-i_2)=\varphi_{\mid Rs\cap I}(i_1-i_2)=\varphi(r_2s-r_1s)=\varphi(r_2s)-\varphi(r_1s),
    \]
 hence $\psi(i_1)+\varphi(r_1s)=\psi(i_2)+\varphi(r_2s)$. Since both $\psi$ and $\varphi$ are left $R$-homomorphisms, $\hat\psi:I+Rs\to M $ is clearly $R$-linear and extends $\varphi$. Finally, by (3) $\hat\psi$ extends to a left $R$-homomorphism $R\to M$.
\end{proof}
\section{Finite graphs with a source loop}

A loop in a graph is an edge $c$ satisfying $s(c)=r(c)$; it is a \emph{source loop} if moreover $r^{-1}(s(c))=c$, i.e., $c$ is the only edge with range $r(c)$.


Let $E$ be a finite graph admitting a source loop $c$ and a sink $w$ with $v:=s(c)\geq w$: the latter means that there exists a path in $E$ connecting the surce $v$ of the loop $c$ and the sink $w$. For each $i\geq 0$, $c^i$ represents the path in $E$ obtained traversing $i$-times the loop $c$; in particular $c^0=s(c)=v$. Let
\[
s^{-1}(v)=\{c, d_1, ..., d_n\}.
\]
The edges $d_1, ..., d_n$ are the \emph{exits} \cite[Definition 2.2.2]{AAM} of the loop $c$.
Consider the set
\[
H:=E^0\setminus \{v\}.
\]
It is easy to check that $H$ is hereditary and saturated. We denote by $I(H)$ the two sided ideal generated by $H$.
\begin{remark}\label{hedgerem}
Both $I(H)$ and $L_K(E)/I(H)$ are Leavitt path algebras:
\begin{itemize}
    \item $I(H)$, when viewed as a $K$-algebra, is isomorphic to the Leavitt path algebra $L_K({}_HE)$, where  ${}_HE$ is the hedgehog graph (see \cite[Definition 2.5.16, Theorem 2.5.19]{AAM}) given by
\begin{align*}
    ({}_HE)^0&=H\sqcup F_E(H)\\
    &F_E(H):=\{c^id_j\mid i\geq 0, 1\leq j\leq n\}\\
    ({}_HE)^1&=\{e\in E^1\mid s(e)\in H\}\sqcup \overline{F_E(H)}\\
    &\overline{F_E(H)}:=\{\overline{c^id_j}\mid i\geq 0, 1\leq j\leq n\}.
\end{align*}
The source and range functions $s', r'$ are defined on  $({}_HE)^1$ setting
\begin{align*}
&s'(e)=s(e), r'(e)=r(e) \quad \forall\ e\in E^1, s(e)\in H\\
& s'(\overline{c^id_j})=c^id_j, r'(\overline{c^id_j})=r(d_j)\quad \forall\ i\geq 0, 1\leq j\leq n.
\end{align*}
Throughout the isomorphism, the vertices in $H\subseteq {}_HE^0$ correspond to the vertices in $H\subseteq I(H)$, the vertices $c^id_j$ in ${}_HE^0$ correspond to the idempotents $c^id_jd_j^*(c^*)^i$ in $I(H)$, the edges in $\{e\in E^1\mid s(e)\in H\}\subseteq ({}_HE)^1$ correspond to the edges $\{e\in E^1\mid s(e)\in H\}\subseteq E^1$, and the edges $\overline{c^id_j}$ correspond to the paths $c^id_j\in I(H)$. Observe that ${}_HE$ has infinitely many vertices and edges. Therefore $I(H)\cong L_K({}_H E)$ is a $K$-algebra without identity. An abelian group $M$ is a left $I(H)$-modules if there is a (standard) left module action of $I(H)$ on $M$, but with the
added proviso that $M$ be unitary, i.e., that $I(H)M= M$.

\item Invoking \cite[Corollary 2.4.13]{AAM} we find that the quotient $L_K(E)/I(H)$ is isomorphic to the Leavitt path algebra $L_K(E/H)$, where $E/H$ is the graph given by:
\begin{align*}
(E/H)^0&=E^0\setminus H=\{v\}\\
(E/H)^1&=\{c\}.
\end{align*}
The source and range functions are the restrictions of those of $E$.
\end{itemize}
Clearly $L_K(E/H)$ is isomorphic to the $K$-algebra  $K[x,x^{-1}]$ of Laurent polynomials.
\end{remark}

\begin{example}
    Consider the graph $E$
    \begin{center}
        \begin{tikzpicture}[
  scale=0.75,
  transform shape,
 >=Stealth,
  line width=0.9pt,
  vertex/.style={inner sep=1.2pt,outer sep=1pt},
  bigloopabove/.style={loop above,out=130,in=50,min distance=15mm,looseness=4},
  bigloopbelow/.style={loop below,out=230,in=310,min distance=15mm,looseness=4},
  bigloopright/.style={loop right,out=35,in=325,min distance=15mm,looseness=4},
  edgelabel/.style={midway,fill=white,inner sep=1.2pt,font=\footnotesize}
]

\node[vertex] (u1) at (0.25,5.75) {$u_1$};
\node[vertex] (u2) at (2.55,3.25) {$u_2$};
\node[vertex] (v)  at (-1.55,1.55) {$v$};
\node[vertex] (w)  at (7.00,3.25) {$w$};
\node[vertex] (y)  at (7.00,0.20) {$y$};
\node[vertex] (z1) at (0.75,0.75) {$z_1$};
\node[vertex] (z2) at (2.65,0.85) {$z_2$};
\node[vertex] (z3) at (2.70,-0.85) {$z_3$};
\node[vertex] (z4) at (0.75,-0.85) {$z_4$};

\draw[->] (u1) edge[bigloopabove]
  node[edgelabel] {$\alpha$} (u1);
\draw[->] (u1) edge[bigloopbelow]
  node[edgelabel] {$\beta$} (u1);

\draw[->] (u2) edge[bigloopabove]
  node[edgelabel] {$\gamma$} (u2);
\draw[->] (u2) edge[bigloopbelow]
  node[edgelabel] {$\delta$} (u2);

\draw[->] (v) edge[bigloopabove]
  node[edgelabel] {$c$} (v);

\draw[->] (y) edge[bigloopright]
  node[edgelabel] {$\varepsilon$} (y);

\draw[->] (u1) to[bend left=8]
  node[edgelabel,pos=0.55,sloped] {$f_1$} (w);
\draw[->] (u2) to[bend left=2]
  node[edgelabel,pos=0.53,sloped] {$f_2$} (w);
\draw[->] (z2) to[bend right=8] (w);

\draw[->] (v) to[bend right=5]
  node[edgelabel,pos=0.48,sloped] {$d_1$} (u2);
\draw[->] (v) to[bend left=4]
  node[edgelabel,pos=0.48,sloped] {$d_2$} (z1);
\draw[->] (v)
  .. controls (-0.95,-2.45) and (4.95,-2.45) ..
  node[edgelabel,pos=0.60,sloped] {$d_3$} (y);

\draw[->] (z1) to[bend left=22]
  node[edgelabel,pos=0.50,sloped] {$e_1$} (z2);
\draw[->] (z2) to[bend left=18]
  node[edgelabel,pos=0.50,sloped] {$e_2$} (z3);
\draw[->] (z3) to[bend left=22]
  node[edgelabel,pos=0.50,sloped] {$e_3$} (z4);
\draw[->] (z4) to[bend left=18]
  node[edgelabel,pos=0.50,sloped] {$e_4$} (z1);

\end{tikzpicture}

    \end{center}
Then ${}_HE$ is the graph
\begin{center}
    \begin{tikzpicture}[
    scale=0.75,
  transform shape,
  >=Stealth,
  line width=0.9pt,
  vertex/.style={inner sep=1.2pt,outer sep=1pt},
  bigloopabove/.style={loop above,out=130,in=50,min distance=15mm,looseness=4},
  bigloopbelow/.style={loop below,out=230,in=310,min distance=15mm,looseness=4},
  bigloopright/.style={loop right,out=35,in=325,min distance=15mm,looseness=4},
  edgelabel/.style={midway,fill=white,inner sep=1.2pt,font=\footnotesize}
]

\node[vertex] (u1) at (0.55,5.75) {$u_1$};
\node[vertex] (u2) at (3.20,3.30) {$u_2$};
\node[vertex] (w)  at (8.00,3.30) {$w$};
\node[vertex] (y)  at (8.00,-1.45) {$y$};

\node[vertex] (z1) at (1.55,0.70) {$z_1$};
\node[vertex] (z2) at (3.55,0.80) {$z_2$};
\node[vertex] (z3) at (3.60,-0.95) {$z_3$};
\node[vertex] (z4) at (1.55,-0.95) {$z_4$};

\draw[->] (u1) edge[bigloopabove]
  node[edgelabel] {$\alpha$} (u1);
\draw[->] (u1) edge[bigloopbelow]
  node[edgelabel] {$\beta$} (u1);

\draw[->] (u2) edge[bigloopabove]
  node[edgelabel] {$\gamma$} (u2);
\draw[->] (u2) edge[bigloopbelow]
  node[edgelabel] {$\delta$} (u2);

\draw[->] (y) edge[bigloopright]
  node[edgelabel] {$\varepsilon$} (y);

\draw[->] (u1) to[bend left=8]
  node[edgelabel,pos=0.55,sloped] {$f_1$} (w);
\draw[->] (u2) to[bend left=2]
  node[edgelabel,pos=0.53,sloped] {$f_2$} (w);
\draw[->] (z2) to[bend right=8] (w);

\draw[->] (z1) to[bend left=22]
  node[edgelabel,pos=0.50,sloped] {$e_1$} (z2);
\draw[->] (z2) to[bend left=18]
  node[edgelabel,pos=0.50,sloped] {$e_2$} (z3);
\draw[->] (z3) to[bend left=22]
  node[edgelabel,pos=0.50,sloped] {$e_3$} (z4);
\draw[->] (z4) to[bend left=18]
  node[edgelabel,pos=0.50,sloped] {$e_4$} (z1);


\node[vertex] (d1) at (-3.00,4.35) {$d_1$};
\node[vertex] (d2) at (-3.00,3.72) {$d_2$};
\node[vertex] (d3) at (-3.00,3.09) {$d_3$};

\node[vertex] (cd1) at (-3.00,2.23) {$c d_1$};
\node[vertex] (cd2) at (-3.00,1.60) {$c d_2$};
\node[vertex] (cd3) at (-3.00,0.97) {$c d_3$};

\node at (-3.00,0.20) {$\vdots$};

\node at (-2.00,0.40) {$\vdots$};

\node[vertex] (cid1) at (-3.00,-0.62) {$c^i d_1$};
\node[vertex] (cid2) at (-3.00,-1.25) {$c^i d_2$};
\node[vertex] (cid3) at (-3.00,-1.88) {$c^i d_3$};

\node at (-3.00,-2.68) {$\vdots$};
\node at (-2.00,-2.50) {$\vdots$};
\draw[->] (d1) to[bend left=4]
  node[edgelabel,pos=0.49,sloped,inner xsep=1.5pt] {$\overline{d_1}$} (u2);
\draw[->] (cd1) to[bend left=12] (u2);
\draw[->] (cid1)
  .. controls (-1.15,0.10) and (1.00,2.85) ..
  node[edgelabel,pos=0.58,sloped,inner xsep=1.5pt] {$\overline{c^i d_1}$} (u2);

\draw[->] (d2)
  .. controls (-1.65,3.15) and (0.10,1.45) ..
  node[edgelabel,pos=0.48,sloped,inner xsep=1.5pt] {$\overline{d_2}$} (z1);
\draw[->] (cd2) to[bend left=5] (z1);
\draw[->] (cid2) to[bend left=10]
  node[edgelabel,pos=0.51,sloped,inner xsep=1.5pt] {$\overline{c^i d_2}$} (z1);

\draw[->] (d3)
  .. controls (-0.45,3.00) and (5.55,1.65) ..
  node[edgelabel,pos=0.48,sloped,inner xsep=1.5pt] {$\overline{d_3}$} (y);
\draw[->] (cd3)
  .. controls (-1.20,-1.35) and (5.25,-2.25) .. (y);
\draw[->] (cid3)
  .. controls (0.15,-3.10) and (5.55,-3.05) ..
  node[edgelabel,pos=0.49,sloped,inner xsep=1.5pt] {$\overline{c^i d_3}$} (y);

\end{tikzpicture}
\end{center}
while $E/H$ is the graph
\begin{center}
 \begin{tikzpicture}[
  >=Stealth,
  line width=0.9pt,
  vertex/.style={inner sep=1.2pt,outer sep=1pt},
  bigloopabove/.style={loop above,out=130,in=50,min distance=15mm,looseness=4},
  bigloopbelow/.style={loop below,out=230,in=310,min distance=15mm,looseness=4},
  bigloopright/.style={loop right,out=35,in=325,min distance=15mm,looseness=4},
  edgelabel/.style={midway,fill=white,inner sep=1.2pt,font=\footnotesize}
]


\node[vertex] (v)  at (-1.55,1.55) {$v$};
\draw[->] (v) edge[bigloopabove]
  node[edgelabel] {$c$} (v);
\end{tikzpicture}    
\end{center}
\end{example}

Before describing the injective envelope of the simple left $L_K(E)$-module $L_K(E)w$, we establish the following property of the paths in $F_E(H)$.
\begin{lemma}\label{lemma:mu*lambda}
For each 
$\mu, \lambda\in F_E(H)=\{c^id_j\mid i\geq 0, 1\leq j\leq n\}$ we have
\[\mu^*\lambda=\begin{cases}
    r(\mu)  & \text{if }\mu=\lambda, \\
      0& \text{otherwise}.
\end{cases}\]
\end{lemma}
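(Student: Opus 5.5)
We compute $\mu^*\lambda$ directly from the defining relations of $L_K(E)$, using only the (CK1) relation $e^*f=\delta_{e,f}r(e)$ for edges $e,f\in E^1$ and the fact that $c$ is a loop at $v$, so $r(c)=v=s(c)=s(d_j)$. Write $\mu=c^id_j$ and $\lambda=c^kd_l$ with $i,k\geq 0$ and $1\leq j,l\leq n$. Then
\[
\mu^*\lambda=d_j^*(c^*)^i c^k d_l .
\]
The plan is to first simplify the middle factor $(c^*)^ic^k$, and then the outer product with $d_j^*$ and $d_l$.

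For the middle factor, we use $c^*c=r(c)=v$ repeatedly, together with $v c=c=c v$ and $v c^*=c^*=c^*v$. By induction on $\min(i,k)$ this gives
\[
(c^*)^ic^k=\begin{cases} c^{k-i} & \text{if } k\geq i,\\ (c^*)^{i-k} & \text{if } i>k,\end{cases}
\]
with the convention $c^0=(c^*)^0=v$.

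We then split into three cases.
\begin{itemize}
\item If $k>i$, then $\mu^*\lambda=d_j^*c^{k-i}d_l$, and the leftmost factor contains $d_j^*c=0$ by (CK1), since $d_j\neq c$.
\item If $i>k$, then $\mu^*\lambda=d_j^*(c^*)^{i-k}d_l$, whose rightmost factor contains $c^*d_l=0$, again since $d_l\neq c$.
\item If $i=k$, then $\mu^*\lambda=d_j^*vd_l=d_j^*d_l=\delta_{j,l}\,r(d_j)$.
\end{itemize}
Since $\mu=\lambda$ exactly when $i=k$ and $j=l$, and in that case $r(\mu)=r(d_j)$, this yields the claimed formula.

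No step is a genuine obstacle. The only point requiring care is checking that the exits $d_j$ are edges distinct from $c$, which holds by the choice $s^{-1}(v)=\{c,d_1,\dots,d_n\}$, so that the cross terms $d_j^*c$ and $c^*d_l$ vanish. Alternatively, the whole statement can be read off from the standard formula \cite{AAM} for $\alpha^*\beta$ with paths $\alpha,\beta$: this product is nonzero only if one path is an initial segment of the other. Here neither $c^id_j$ nor $c^kd_l$ can be a proper prefix of the other, because each ends with an exit while the other continues with $c$ at that position, or the two have equal length and different last edges.
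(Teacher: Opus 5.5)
Your proposal is correct and follows the paper's proof. Both expand $\mu^*\lambda=d_j^*(c^*)^ic^kd_l$ and apply (CK1). You spell out the case split on $i$ versus $k$, which the paper dismisses as ``easily'' giving $i_1=i_2$ and $j_1=j_2$, and you also check that the product equals $r(\mu)$ when $\mu=\lambda$.
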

\begin{proof}
Let $\mu=c^{i_1}d_{j_1}$ and $\lambda=c^{i_2}d_{j_2}$.
If
\[
0\not=\mu^*\lambda=d_{j_1}^*(c^*)^{i_1}c^{i_2}d_{j_2}
\]
one gets easily $i_1=i_2$, and $j_1=j_2$.
\end{proof}

We now recall that, by \cite[Corollary 4.6]{AMT26}, since the set $E^0$ of vertices in $E$ is finite, the injective envelope of the simple left $L_K(E)$-module $L_K(E)w$ is the module
\begin{align*}
 X^E_w&=   \{\sum_{\lambda\in \Path({}_HE)w}k_\lambda\lambda\mid k_\lambda\in K, |\{s(\lambda);k_\lambda\not=0\}|\text{ is finite}\}\\
 &=\{\sum_{\lambda\in\Path(E)w}k_\lambda \lambda\mid k_\lambda\in K\}
\end{align*}
of formal series of paths in $E$ ending with $w$.
Each element $\xi$ of $X^E_w$ can be written as
\[
\xi=v\xi+(\xi-v\xi).\]
Consider the subset
\begin{align*}
 vX^E_w:=\{v\cdot\xi\mid\xi\in X^E_w\}&=\{\sum_{\lambda\in \Path(E)w}k_\lambda \lambda\mid s(\lambda)=v, k_\lambda\in K\}\\
 &=\{\sum_{\lambda\in v\Path(E)w}k_\lambda \lambda\mid k_\lambda\in K\}
\end{align*}
The paths belonging to $v\Path(E) w$ have the form
\[c^id_j\nu\quad i\geq 0, j\in\{1,..,n\}\]
where the $c$-\emph{tail} $d_j\nu$ is a path in $v\Path(E)w$ not starting with a traverse of the loop $c$.
Denote by $Y_{v,w}$ the set of all possible $c$-tails of the paths in $v\Path(E) w$. Collecting the paths with the same $c$-tail, for any $\xi\in X^E_w$ we have
\[v\cdot\xi=\sum_{\mu\in Y_{v,w}}A_\mu(c)\mu\]
where $A_\mu(c)$ is the evaluation in $c$ of a formal series $A_\mu(x)$ belonging to the $K$-algebra $K[[x]]$.

Consider the following two submodules of $X^E_w$:
\begin{align*}
    \overline{L_K(E)w}:&=\{\xi\in X^E_w\mid v\xi=\sum_{\mu\in Y_{v,w}}p_\mu(c)\mu,\\
    &p_\mu(x)\in K[x], \exists N_\xi\in\mathbb N: \deg p_\mu(x)\leq N_\xi\}
\end{align*}
\begin{align*}
    D(L_K(E)w):=\{\xi\in X^E_w\mid v\xi=&\sum_{\mu\in Y_{v,w}}A_\mu(c)\mu,\\
    &A_\mu(x)\in K[x]_{\mathfrak M}\}.
\end{align*}
It is $L_K(E)w\subseteq \overline{L_K(E)w}\subsetneq D(L_K(E)w)\subsetneq X^E_w$.
Since $D(L_K(E)w)$ contains the simple $L_K(E)w$ and is properly contained in $X^E_w$, it is not an injective left $L_K(E)$-module. We will prove that it is a divisible left $L_K(E)$-module.

To this end, we first establish some preliminary results. We start by describing the images of homomorphisms defined on left ideals contained in $I(H)$.

\begin{lemma}\label{lemma:image}
    Let $J$ be a left ideal contained in the two-sided ideal $I(H)$ generated by $H:=E^0\setminus\{v\}$, and let $\varphi:J\to X^E_w$ be a homomorphism of the left $L_K(E)$-modules. Then 
    \[\Imm \varphi\subseteq \overline{L_K(E)w}.\]
\end{lemma}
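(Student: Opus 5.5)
The plan is to avoid analysing $J$ directly and instead use injectivity of $X^E_w$. Since $X^E_w$ is the injective envelope of $L_K(E)w$ (\cite[Corollary 4.6]{AMT26}), it is injective. So $\varphi$ extends to a homomorphism $\Phi:L_K(E)\to X^E_w$, and setting $\eta:=\Phi(1)\in X^E_w$ we get $\varphi(x)=x\eta$ for every $x\in J$. It then suffices to show that $x\eta\in\overline{L_K(E)w}$ for every $x\in I(H)$.

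\textbf{Step 1: a local unit for $x$.} I will use the standard description of $I(H)$ as the $K$-span of the monomials $\alpha\beta^*$ with $r(\alpha)=r(\beta)\in H$. For such a monomial there are two cases.
\begin{itemize}
\item If $s(\beta)\in H$, then $\alpha\beta^*=\alpha\beta^*s(\beta)$.
\item If $s(\beta)=v$, then $\beta$ cannot be a power of $c$, since $r(\beta)\in H$. Hence $\beta=c^id_j\beta'$, and with $\mu=c^id_j\in F_E(H)$ we get $\alpha\beta^*=\alpha\beta^*\mu\mu^*$, because $\mu^*\mu=r(\mu)$ by \Cref{lemma:mu*lambda}.
\end{itemize}
A given $x\in I(H)$ involves only finitely many monomials. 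So there is $N\in\mathbb N$ such that $x=xe_N$, where
\[
e_N:=\sum_{u\in H}u+\sum_{i\le N,\,1\le j\le n}c^id_jd_j^*(c^*)^i .
\]
By \Cref{lemma:mu*lambda} the idempotents $\mu\mu^*$ are pairwise orthogonal. They are also orthogonal to every $u\in H$, since $\mu\mu^*=v\mu\mu^*$. Hence $e_N$ is an idempotent, although this is not even needed; only $x=xe_N$ is used. Consequently $\varphi(x)=x(e_N\eta)$.

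\textbf{Step 2: $e_N\eta\in\overline{L_K(E)w}$.} Since $vu=0$ for $u\in H$, we have
\[
v\,e_N\eta=\sum_{i\le N,\,j}c^id_j\bigl(d_j^*(c^*)^i\eta\bigr).
\]
Each $d_j^*(c^*)^i\eta$ lies in $r(d_j)X^E_w$, so it is a formal series of paths from $r(d_j)$ to $w$. Therefore $c^id_j\,d_j^*(c^*)^i\eta$ is a series of paths whose $c$-tails all start with $d_j$ and whose $c$-part is exactly $c^i$. Collecting by $c$-tails, every coefficient $p_\mu(x)$ is a polynomial of degree at most $N$. This proves $e_N\eta\in\overline{L_K(E)w}$.

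\textbf{Step 3: closing under the action of $x$.} I will invoke that $\overline{L_K(E)w}$ is a left submodule of $X^E_w$, as asserted in the paper. If that needs checking, it amounts to a short computation on generators:
\begin{itemize}
\item left multiplication by $c$ raises every degree by one;
\item $c^*$ lowers degrees, because $c^*\mu=0$ for a $c$-tail $\mu=d_j\nu$;
\item $d_j^*$ and paths or ghost paths from vertices in $H$ turn $v\xi$ into series not starting at $v$;
\item paths ending at $v$ prepend a power of $c$, because $c$ is a source loop.
\end{itemize}
In each case the bound on degrees changes only by a finite amount. Hence $x(e_N\eta)\in\overline{L_K(E)w}$, which gives $\operatorname{Im}\varphi\subseteq\overline{L_K(E)w}$.

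The main obstacle is making Step 1 rigorous, namely the spanning description of $I(H)$ and the reduction $x=xe_N$. The delicate point is that, because $c$ is a source loop, every $\beta$ from $v$ into $H$ factors through some $c^id_j$ with $i$ bounded in terms of $x$. Step 3, stability of the uniform degree bound, is routine but must be checked for each type of generator.
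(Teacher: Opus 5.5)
Your proof is correct, but it takes a different route from the paper's. The paper never extends $\varphi$. For $j\in J$ it writes $vj=\sum_i k_i\gamma_i\delta_i^*$ with $\gamma_i=c^{\ell_i}d_{j_i}\gamma_i'$, and observes that $(c^*)^N j=0$ as soon as $N>\max_i\ell_i$. Left linearity then gives $(c^*)^N v\varphi(j)=\varphi((c^*)^N j)=0$, which forces every series $A_\mu(x)$ to have no terms of degree $\geq N$. That argument uses nothing about $\varphi$ beyond left linearity on the left ideal $J$. In particular it needs neither the injectivity of $X^E_w$ nor the closure of $\overline{L_K(E)w}$ under the action of $L_K(E)$. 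The paper only asserts that closure, so your Step 3 check is a genuine part of your argument, not a formality.

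Your reduction via injectivity does have a conceptual payoff: it shows the lemma is equivalent to the $J$-free statement $I(H)X^E_w\subseteq\overline{L_K(E)w}$. The cost comes from choosing a \emph{right} local unit, built from the $\beta$-parts; that choice is what forces the detour through injectivity and Step 3. Build a \emph{left} local unit instead, using the factorisation $\alpha=c^id_j\alpha'$ of the left factors, so that $\mu\mu^*\alpha=\alpha$ with the same orthogonality argument. Then $e_Nx=x$, hence $\varphi(x)=\varphi(e_Nx)=e_N\varphi(x)$. Your Step 2 computation works verbatim for any element of $X^E_w$, so it finishes the proof immediately. This is essentially the paper's mechanism, with the idempotent $e_N$ in place of the annihilating power $(c^*)^N$.
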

\begin{proof}
Let $j\in J$ and $v\varphi(j)=\varphi(vj)=\displaystyle\sum_{\mu\in Y_{v,w}}A_\mu(c)\mu$ with $A_\mu(x)\in K[[x]]$. Let us prove that $A_\mu(x)\in K[x]$ for each $\mu\in Y_{v,w}$, and that the degrees of these polynomials are bounded by a suitable $N_j\in\mathbb N$. Fix $\overline{\mu}\in Y_{v,w}$.
By \cite[Lemma 2.4.1]{AAM}, 
    \[vj=\sum_{i=1}^m k_i\gamma_i\delta^*_i\qquad s(\gamma_i)=v, r(\gamma_i)=r(\delta_i)\in H, i=1,...,m.\]
    For $i=1,...,m$, we have
\[\gamma_i=c^{\ell_i}d_{j_i}\gamma'_i,\quad \ell_i\geq 0, 1\leq j_i\leq n, s(\gamma_i')\in H.\]
Thus, if $N_j>\max\{\ell_i\mid i=1,...,m\}$, we have
\begin{align*}
    (c^*)^{N_j} j&=(c^*)^{N_j} vj=(c^*)^{N_j}\sum_{i=1}^m k_i\gamma_i\delta^*_i\\
    &=(c^*)^{N_j}\sum_{i=1}^m k_i c^{\ell_i}d_{j_i}\gamma'_i\delta^*_i=0.
\end{align*}
Then
\begin{align*}
  0=\varphi((c^*)^{N_j}j)=(c^*)^{N_j}\varphi(vj)=(c^*)^{N_j}\sum_{\mu\in Y_{v,w}}A_\mu(c)\mu
\end{align*}
Hence $A_\mu(x)$, $\mu\in Y_{v,w}$, are polynomials of degree $<N_j$, and $\varphi(j)\in \overline{L_K(E)w}$.
\end{proof}

We now verify the conditions of the divisibility criterion established in the previous section (see \Cref{Steps} with $I=I(H)$). We start with the following result concerning left ideals contained in $I(H)$.

\begin{proposition}[Step 1 of \Cref{Steps}]\label{prop:Step1}
    If $J$ is a left $L_K(E)$-ideal contained in $I(H)$, then each homomorphism $\varphi:J\to D(L_K(E)w)$ extends to a homomorphism $I(H)\to D(L_K(E)w)$.
\end{proposition}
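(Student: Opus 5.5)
The idea is to use the injectivity of $X^E_w$ together with \Cref{lemma:image}, which controls the image of any homomorphism defined on a left ideal contained in $I(H)$. I do not plan to construct the extension by hand. Instead, I will first extend into the larger injective module $X^E_w$ and then check that the extension automatically takes values in $D(L_K(E)w)$.

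Let $J\le I(H)$ be a left ideal and $\varphi\colon J\to D(L_K(E)w)$ a homomorphism of left $L_K(E)$-modules. Compose $\varphi$ with the inclusions $D(L_K(E)w)\subseteq X^E_w$ to regard it as a homomorphism $\iota\varphi\colon J\to X^E_w$. By \cite[Corollary 4.6]{AMT26}, $X^E_w$ is the injective envelope of $L_K(E)w$, and in particular it is an injective left $L_K(E)$-module. Hence $\iota\varphi$ extends along $J\hookrightarrow L_K(E)$ to a homomorphism $\Psi\colon L_K(E)\to X^E_w$. Baer's criterion is not even needed here, only the definition of injectivity. Let $\psi:=\Psi_{\mid I(H)}\colon I(H)\to X^E_w$; this is a homomorphism of left $L_K(E)$-modules, since $I(H)$ is a (two-sided, hence left) ideal. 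By construction, $\psi$ extends $\varphi$.

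The key step is to show that $\psi$ lands in $D(L_K(E)w)$. Since $I(H)$ is a left ideal contained in $I(H)$, \Cref{lemma:image} applies to $\psi$ and gives
\[
\Imm\psi\subseteq\overline{L_K(E)w}.
\]
Every polynomial $p_\mu(x)\in K[x]$ lies in $K[x]_{\mathfrak M}$, with denominator $1$. Therefore $\overline{L_K(E)w}\subseteq D(L_K(E)w)$, which is the inclusion already recorded before \Cref{lemma:image}. So $\psi$ corestricts to a homomorphism $I(H)\to D(L_K(E)w)$ extending $\varphi$, as required.

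I do not expect a real obstacle, since the substantive work (bounding the degrees via $(c^*)^{N}$) is already done in \Cref{lemma:image}. The only point to check is that \Cref{lemma:image} is stated for arbitrary homomorphisms into $X^E_w$ defined on left ideals inside $I(H)$, so it applies to the restricted extension $\psi$ and not only to $\varphi$ itself.
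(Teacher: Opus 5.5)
Your proof is correct, and it takes a different and shorter route than the paper.

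The paper works inside $I(H)$ throughout:
\begin{enumerate}
\item it uses $I(H)\cong L_K({}_HE)$;
\item it checks that $\overline{L_K(E)w}$ is a unitary $I(H)$-module isomorphic to $X^{{}_HE}_w$, hence injective over $I(H)$;
\item it applies \Cref{lemma:image} to $\varphi$ itself, to see that the image lies in $\overline{L_K(E)w}$;
\item it extends $\varphi$ as an $I(H)$-homomorphism $I(H)\to\overline{L_K(E)w}$;
\item it uses local units of $I(H)$ to show this extension is $L_K(E)$-linear.
\end{enumerate}

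You reverse the order. You first extend into the injective module $X^E_w$ over the unital ring $L_K(E)$. Only afterwards do you apply \Cref{lemma:image}, with $J=I(H)$, to the restriction $\Psi_{\mid I(H)}$, which forces the image back into $\overline{L_K(E)w}\subseteq D(L_K(E)w)$.

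Your argument needs only facts already available in the paper: the injectivity of $X^E_w$, and \Cref{lemma:image} for an arbitrary left ideal inside $I(H)$, which is the point you correctly flag. It dispenses with the hedgehog graph, the unitarity check, the identification $X^{{}_HE}_w\cong\overline{L_K(E)w}$, and the passage from $I(H)$-linearity to $L_K(E)$-linearity.

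The paper's longer argument does give one extra structural fact: $\overline{L_K(E)w}$ is an injective unitary $I(H)$-module, namely a copy of the injective envelope of $L_K({}_HE)w$. That fact is not needed for this proposition.

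Your idea also shortens \Cref{prop:Step2}. Given $r\in I(H)$, choose an idempotent $e\in I(H)$ with $re=r$, for instance $e=\sum_{u\in H}u+\sum_{\mu\in F}\mu\mu^*$ for a suitable finite $F\subseteq F_E(H)$. Extend $\varphi$ to $\Psi\colon L_K(E)\to X^E_w$ and set $\hat\varphi(1):=\Psi(e)$. This lies in $\overline{L_K(E)w}$ by \Cref{lemma:image}, and $\hat\varphi(r)=\Psi(re)=\varphi(r)$.
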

\begin{proof}
As observed in \Cref{hedgerem}, the two sided ideal $I(H)$ is isomorphic as $K$-algebra to $L_K({}_HE)$. Therefore the categories of unitary left $I(H)$-modules and that of left $L_K({}_HE)$-modules can be identified.
Claim 1: The left $L_K(E)$-module $\overline{L_K(E)w}$ is also a left $I(H)$-module.\\
The left module action of $I(H)$ on $\overline{L_K(E)w}$ is induced by $I(H)\subseteq L_K(E)$. Let us check that $\overline{L_K(E)w}$ is a unitary left $I(H)$-module, i.e.,
\[I(H)\overline{L_K(E)w}=\overline{L_K(E)w}.\]
    Let $\xi
\in \overline{L_K(E)w}$. We have
    \[v\xi=\sum_{\mu\in Y_{v,w}}p_{\mu}(c)d_{i_\mu}\mu'\]
where $d_{i_\mu}$, $1\leq i_\mu\leq n$, are exits of $c$, $s(\mu')\in H$, and  $p_\mu(x)\in K[x]$ with $\deg p_\mu(x)\leq N_\xi$ for a suitable integer $N_\xi$ depending only on $\xi$. 
    Consider the following set of idempotent elements in $I(H)$:
    \[S_\xi=H\cup \{(c^t d_i)(c^t d_i)^*\mid 0\leq t\leq N_\xi, d_i\text{ exit of }c\text{ for }1\leq i\leq n\}.\]
    The set $S_\xi$ is finite: indeed $H$ is finite, $t\leq N_\xi$, and the number of exits of $c$ is finite. Moreover, its elements are pairwise orthogonal. 
  It is easy to check that
    \[(\sum_{u\in S_\xi}u)\xi=(\sum_{u\in S_\xi\setminus H}u)v\xi+(\sum_{u\in H}u)(\xi-v\xi)=v\xi+(\xi-v\xi)=\xi.\]
    By the arbitrary choice of $\xi$, it follows that $\overline{L_K(E)w} $ is a unitary left $I(H)$-module.\\
Claim 2: The left $I(H)$-module $\overline{L_K(E)w} $ is isomorphic to the injective left $I(H)$-module $X^{{}_HE}_w$.\\    
By \cite[Corollary 4.6]{AMT26}, the injective envelope of the left $L_K({}_HE)$-module $L_K({}_HE)w$ is
\[
X^{{}_HE}_w=\{\sum_{\lambda\in \Path({}_HE)w}k_\lambda\lambda\mid k_\lambda\in K, |\{s(\lambda);k_\lambda\not=0\}|\text{ is finite}.\}
\]
As observed at the beginning of the proof, $X^{{}_HE}_w$ is an injective left $I(H)$-module.
For each element $\xi\in X^{{}_HE}_w$, for any exit $d_i$, $i=1,..., n$, of $c$ in $E$ it is
\[
d_i^*(c^*)^t\cdot\xi\equiv (\overline{c^td_i})^*\xi=0\qquad t\gg 0.
\]
Then, the map between $\Path({}_HE)w$ and $\Path(E)w$
\[
\lambda\mapsto \begin{cases}
   \lambda   & \text{if }s(\lambda)\in H, \\
    c^td_i\lambda'  & \text{if }\lambda=\overline{c^td_i}\lambda'
\end{cases}
\]
induces an isomorphism of left $I(H)$-modules between $X^{{}_HE}_w$ and $\overline{L_K(E)w}$.
%
Claim 3: Conclusion of the proof.\\
By Claim 2, $\overline{L_K(E)w}$ is an injective left $I(H)$-module. Since $D(L_K(E)w)$ is contained in $X^E_w$, by \Cref{lemma:image} $\Imm\varphi\subseteq \overline{L_K(E)w}$.
    Thus we have the following commutative diagram
    \[
\xymatrix{J\ar@{-->}[ddd]_{\varphi}\ar[dr]^{\overline\varphi}\ar@{^(->}[rrr]^{\iota_1}&&&I(H)\ar@{.>}[lld]_\psi\\
    &\overline{L_K(E)w}\ar@{^(-->}[ddl]^{\iota_2}\\
    {}\\
    D(L_K(E)w)
    }
    \]
where the dashed arrows are homomorphisms of left $L_K(E)$-modules, the solid arrows are homomorphisms of left $L_K(E)$- and $I(H)$- modules, and the dotted arrow $\psi:I(H)\to \overline{L_K(E)w}$ is a homorphism of left $I(H)$-modules such that $\psi\circ \iota_1=\overline\varphi$. If $\psi$ were a homomorphism of left $L_K(E)$-modules, we would get $\iota_2\circ \psi\circ \iota_1=\varphi$ thus arriving at the thesis. Let us prove it.
    Given $i\in I(H)$ and $r\in L_K(E)$, we have to check that $\psi(ri)=r\psi(i)$. Since $I(H)$ has local units, there exists $i_0\in I(H)$ such that $i_0i=i$. Therefore
    \[
    \psi(ri)=\psi(r(i_0i))=\psi((ri_0)i)=(ri_0)\psi(i)=r(i_0\psi(i))=r\psi(i_0i)=r\psi(i).
    \]   
\end{proof}

We next consider principal left ideals contained in $I(H)$.
\begin{proposition}[Step 2 of \Cref{Steps}]\label{prop:Step2}
    For each $r\in I(H)$, any left $L_K(E)$-homomorphism $\varphi:L_K(E)r\to D(L_K(E)w)$ lifts to a left $L_K(E)$-homomorphism $\hat{\varphi}:L_K(E)\to D(L_K(E)w)$.
\end{proposition}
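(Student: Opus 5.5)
Since $r\in I(H)$ and $I(H)$ has local units, we may choose a finite set of pairwise orthogonal idempotents whose sum $\epsilon$ satisfies $r\epsilon=r$. The natural candidate is $\epsilon=\sum_{u\in S}u$ with $S\subseteq H\cup\{(c^td_i)(c^td_i)^*\}$ finite, as in the proof of \Cref{prop:Step1}. Then $r=r\epsilon\in L_K(E)\epsilon$, and $L_K(E)r\subseteq L_K(E)\epsilon\subseteq I(H)$.

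The plan is to extend $\varphi$ in two stages.

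\textbf{Stage 1.} Apply \Cref{prop:Step1} with $J=L_K(E)r$. This extends $\varphi$ to a homomorphism $\psi:I(H)\to D(L_K(E)w)$; by \Cref{lemma:image} its image actually lies in $\overline{L_K(E)w}$.

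\textbf{Stage 2.} Define $\hat\varphi:L_K(E)\to D(L_K(E)w)$ by
\[
\hat\varphi(a):=\psi(a\epsilon),
\]
which is plainly left $L_K(E)$-linear since right multiplication by $\epsilon$ is. For $a\in L_K(E)$ we have $ar\in L_K(E)r$ and
\[
\hat\varphi(ar)=\psi(ar\epsilon)=\psi(ar)=\varphi(ar),
\]
so $\hat\varphi$ restricts to $\varphi$ on $L_K(E)r$. Equivalently, $\hat\varphi$ is the map $R\to M$, $1\mapsto \psi(\epsilon)$, because $\psi(a\epsilon)=a\psi(\epsilon)$.

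The only point needing care, and the expected obstacle, is the existence of the idempotent $\epsilon\in I(H)$ with $r\epsilon=r$: we need local units acting on the right, not just on the left as used in \Cref{prop:Step1}. This should follow from the isomorphism $I(H)\cong L_K({}_HE)$ of \Cref{hedgerem}, since Leavitt path algebras have two-sided local units given by finite sums of vertices. Concretely, write $r=\sum k_i\gamma_i\delta_i^*$ with $r(\gamma_i)=r(\delta_i)\in H$ and each term lying in $I(H)$. Each $\delta_i^*$ either starts at a vertex of $H$, or $\delta_i=c^{t}d_j\delta_i'$, in which case $\delta_i^*=\delta_i'^*(c^td_j)^*\bigl((c^td_j)(c^td_j)^*\bigr)$. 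In both cases a suitable idempotent from $S$ fixes the term on the right, and taking $S$ to contain all of these finitely many idempotents gives $r\epsilon=r$. Orthogonality of the elements of $S$ follows from \Cref{lemma:mu*lambda}, together with $u\cdot c^td_j=0$ for $u\in H$.

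Alternatively, one could simply set $\hat\varphi(1)=\varphi(r\epsilon')$-type expressions, but the route via $\psi$ is cleaner. It also shows that the image of $\hat\varphi$ even lies in $\overline{L_K(E)w}$.
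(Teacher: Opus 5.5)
Your argument is correct and is essentially the paper's proof. The paper's explicit value $\hat\varphi(1)=\sum_{u\in H}\psi(u)+\sum_{\mu\in F}\mu\psi(\mu^*)$ is exactly $\psi(\epsilon)$ for the idempotent $\epsilon=\sum_{u\in H}u+\sum_{\mu\in F}\mu\mu^*$, which generates the finite summand $I_F=L_K(E)\epsilon$ containing $r$; so both proofs extend $\varphi$ to $I(H)$ by \Cref{prop:Step1} and then use that $L_K(E)\epsilon$ is a direct summand of $L_K(E)$.

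The differences are minor. The paper reads off $\epsilon$ from the decomposition $I(H)=\bigoplus_{u\in H}L_K(E)u\oplus\bigoplus_{\mu\in F_E(H)}L_K(E)\mu^*$, whereas you get $r\epsilon=r$ directly from the spanning set $\{\gamma\delta^*\}$ and \Cref{lemma:mu*lambda}. Your formula $\hat\varphi(a)=\psi(a\epsilon)$ also replaces the paper's generator-by-generator check with a one-line verification. One wording fix: your case split should read $s(\delta_i)\in H$ versus $s(\delta_i)=v$, since $\delta_i^*$ always starts in $H$.
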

\begin{proof}
By \cite[Proposition~3.4]{AMT26}, the two sided ideal $I(H)$ is equal to the following direct sum of left ideals:
\[(\oplus_{u\in H}L_K(E)u)\bigoplus(\oplus_{\mu\in F_E(H)}L_K(E)\mu^*).\]
If $r\in I(H)$, then it belongs to
\[I_F:=(\oplus_{u\in H}L_K(E)u)\bigoplus(\oplus_{\mu\in F}L_K(E)\mu^*)\]
for a suitable finite subsets $F$ of $F_E(H)$.
By \Cref{prop:Step1}, $\varphi$ extends to an homomorphism of left $R$-modules $\overline\varphi:I(H)\to D(L_K(E)w)$. We will conclude, lifting the restriction $\psi:=\overline\varphi_{\mid I_F}:I_F\to D(L_K(E)w)$ of $\overline\varphi$ to the $K$-algebra $L_K(E)$. By \Cref{lemma:image}, $\Imm\psi\subseteq \overline{L_K(E)w}$.
For any $u\in H$, we have
\[\psi(u)=u\psi(u)=u(\sum_{\lambda\in \Path(E)w}k_{\lambda,u} \lambda)=\sum_{\lambda\in u\Path(E)w}k_{\lambda,u} \lambda.\]
If $\eta\in F\subseteq F_E(H)$, then $r(\eta)\in H$: indeed $\eta= c^{j_\eta} d_{i_\eta}$ for suitable $j_\eta\geq 0$, and exit $d_{i_\eta}$, $1\leq i_\eta\leq n$, of $c$. Then
\begin{align*}
    \psi(\eta^*)&=r(\eta)\psi(\eta^*)=r(\eta)(\sum_{\lambda\in \Path(E)w}k_{\lambda,\eta} \lambda)\\
    &=\sum_{\lambda\in r(\eta)\Path(E)w}k_{\lambda,\eta} \lambda
\end{align*}
We now define the $\hat\varphi:R\to D(L_K(E)w)$ setting
\[
\hat\varphi(1):=\sum_{u\in H}\sum_{\lambda\in u\Path(E)w}k_{\lambda,u} \lambda+\sum_{\mu\in F}\mu\sum_{\lambda\in r(\mu)\Path(E)w}k_{\lambda,\mu} \lambda.
\]
Observe that since $F$ is finite, $\max\{j_\eta\mid \eta=c^{j_\eta} d_{i_\eta}\in F\}$ is a natural number and hence $\hat\varphi(1)$ belongs to $\overline{L_K(E)w}\subseteq D(L_K(E)w)$.
Let us check that $\hat\varphi$ lifts $\varphi$:
\begin{align*}
\hat\varphi(u)&=u\hat\varphi(1)=u\cdot\left(\sum_{u\in H}\sum_{\lambda\in u\Path(E)w}k_{\lambda,u} \lambda+\sum_{\mu\in F}\mu\sum_{\lambda\in r(\mu)\Path(E)w}k_{\lambda,\mu} \lambda
    \right)\\
    &=\sum_{\lambda\in u\Path(E)w}k_{\lambda,u} \lambda=\psi(u)\qquad\forall u\in H.
\end{align*}
\begin{align*}
    \hat\varphi(\eta^*)&=\eta^*\hat\varphi(1)=\eta^*\cdot \left(\sum_{u\in H}\sum_{\lambda\in u\Path(E)w}k_{\lambda,u} \lambda+\sum_{\mu\in F}\mu\sum_{\lambda\in r(\mu)\Path(E)w}k_{\lambda,\mu} \lambda
    \right)\\
    &\text{by \Cref{lemma:mu*lambda}}\\
    &=\sum_{\lambda\in r(\eta)\Path(E)w}k_{\lambda,\eta} \lambda
    =\psi(\eta^*).
\end{align*}
\end{proof}

It remains to consider the last case of left ideals generated by $I(H)$ and a fixed element outside $I(H)$.
\begin{proposition}[Step 3 of \Cref{Steps}]
Let $r\in L_K(E)\setminus I(H)$. Any homomorphism of left $L_K(E)$-modules $\varphi:L_K(E)r+I(H)\to D(L_K(E)w)$ lifts to an homomorphism $L_K(E)\to D(L_K(E)w)$.
\end{proposition}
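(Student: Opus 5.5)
The plan is to reduce $r$ to a very simple normal form using the quotient $L_K(E)/I(H)\cong K[x,x^{-1}]$ of \Cref{hedgerem}. Then I write down an explicit candidate for the image of $1$ and check it against the generators.

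\textbf{Normal form for $r$.} The image of $r$ in $K[x,x^{-1}]$ is a nonzero Laurent polynomial, since $r\notin I(H)$. Up to a unit $x^m$ it equals a polynomial $p(x)=\sum_{k=0}^N p_kx^k$ with $p_0\neq0$. In $L_K(E)$ the unit $x$ is represented by $c$ and $x^{-1}$ by $c^*$. Indeed $c^*c=v$ and $cc^*=v-\sum_j d_jd_j^*\equiv v$ modulo $I(H)$, so $c^m$ and $(c^*)^m$ are mutually inverse modulo $I(H)$. Setting $q:=p(c)=p_0v+p_1c+\dots+p_Nc^N$, I expect
\[
L_K(E)r+I(H)=L_K(E)q+I(H).
\]
For this I multiply $r$ on the left by the appropriate power of $c$ or $c^*$, and conversely multiply $q$ by the inverse power. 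The identity holds because both sides contain $I(H)$ and have the same image in the quotient.

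It then suffices to find $\xi\in D(L_K(E)w)$ with
\[
u\xi=\varphi(u)\ (u\in H),\qquad \mu^*\xi=\varphi(\mu^*)\ (\mu\in F_E(H)),\qquad q\xi=\varphi(q).
\]
By \cite[Proposition~3.4]{AMT26}, the elements $u\in H$ and $\mu^*$ with $\mu\in F_E(H)$ generate $I(H)$ as a left ideal. So these conditions make $\hat\varphi(1):=\xi$ extend $\varphi$.

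\textbf{The candidate $\xi$.} I define
\[
\xi:=\sum_{u\in H}\varphi(u)+\sum_{\mu\in F_E(H)}\mu\,\varphi(\mu^*),
\]
as in \Cref{prop:Step2}, but now summing over all of $F_E(H)$. This sum makes sense in $X^E_w$: $\varphi(\mu^*)=r(\mu)\varphi(\mu^*)$, and distinct $\mu$ give disjoint sets of paths. The first two conditions follow as in \Cref{prop:Step2}. For $u\in H$ we use $u\mu=0$, because $s(\mu)=v\notin H$. For the $\mu^*$ we use \Cref{lemma:mu*lambda}.

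\textbf{The main obstacle: $\xi\in D(L_K(E)w)$.} Only $v\xi=\sum_{i,j}c^id_j\varphi((c^id_j)^*)$ matters, so the issue is the rationality of its coefficient series. I plan to exploit $q$. For $\mu=c^id_j$ we have $d_j^*c=0$, so
\[
\mu^*q=\sum_{k\le i}p_k\,(c^{i-k}d_j)^*\in I(H).
\]
Applying $\varphi$ gives the recurrence
\[
\sum_{k\le i}p_k\,\varphi\big((c^{i-k}d_j)^*\big)=(c^id_j)^*\varphi(q).
\]
Fix an exit $d_j$ and a path $\nu$ with $d_j\nu\in Y_{v,w}$. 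Let $B(x)\in K[[x]]$ be the series whose $i$-th coefficient is the coefficient of $\nu$ in $\varphi((c^id_j)^*)$; this is exactly the series $A_{d_j\nu}$ of $\xi$. The recurrence then says $p(x)B(x)=A_{d_j\nu}(x)$, where $A_{d_j\nu}$ is the corresponding series of $\varphi(q)$. The latter lies in $K[x]_{\mathfrak M}$ because $\varphi(q)\in D(L_K(E)w)$. Since $p(0)\neq0$, we get $B=A_{d_j\nu}/p\in K[x]_{\mathfrak M}$, and hence $\xi\in D(L_K(E)w)$.

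\textbf{The condition on $q$.} Let $\zeta:=q\xi-\varphi(q)$. For $u\in H$ we have $uq=0$, so $u\zeta=-\varphi(uq)=0$. For $\mu\in F_E(H)$, the element $\mu^*q$ lies in $I(H)$, where $\xi$ already realises $\varphi$; hence $\mu^*\zeta=\varphi(\mu^*q)-\varphi(\mu^*q)=0$. Thus $\zeta=v\zeta$. Every path from $v$ to $w$ begins with some $c^id_j$, so $\zeta$ is determined by the elements $(c^id_j)^*\zeta$, and therefore $\zeta=0$. This completes the extension; the construction even bypasses \Cref{prop:Step1}.
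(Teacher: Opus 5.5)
Your proof is correct, but it takes a genuinely different route from the paper's.

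\textbf{The paper's route.} After the same reduction to $L_K(E)r+I(H)=L_K(E)p(c)+I(H)$ with $p(0)\neq0$, the paper proves that this left ideal is principal. It is generated by $z=p_\ell c^\ell+\cdots+p_1c+1_{L_K(E)}$; the proof uses $u=uz$ and an induction on $j$ showing $d_i^*(c^*)^j\in L_K(E)z$. The paper then simply divides by $z$: it sets $\hat\varphi(1)=(A(c)+\sum_{u\in H}u)\varphi(z)$ with $A=p^{-1}\in K[x]_{\mathfrak M}$. Membership in $D(L_K(E)w)$ is then immediate, because multiplying the coefficient series of $v\varphi(z)$ by $A$ keeps them in $K[x]_{\mathfrak M}$.

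\textbf{Your route.} You never need principality. You build $\xi$ from the values of $\varphi$ on the left-ideal generators $u$ and $\mu^*$ of $I(H)$, which is the Step 2 formula taken over all of $F_E(H)$. Compatibility on $I(H)$ is then immediate from \Cref{lemma:mu*lambda}. Rationality comes from the recurrence $p(x)B(x)=A_{d_j\nu}(x)$, obtained by applying $\varphi$ to $\mu^*q=\sum_{k\le i}p_k(c^{i-k}d_j)^*$. Compatibility with $q$ comes from the separation fact that an element of $X^E_w$ killed by every $u\in H$ and every $\mu^*$, $\mu\in F_E(H)$, is zero.

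\textbf{What each buys.} The paper's route gives a one-line closed formula. It also yields the structural fact that every ideal in condition (3) of \Cref{Steps} is principal, so that condition reduces to divisibility by the single element $z$. However, it leaves to the reader the check $z\hat\varphi(1)=\varphi(z)$, which involves the action of the formal series $A(c)$ on $X^E_w$. Your route checks every generator explicitly. Your separation argument also shows that the extension $L_K(E)\to X^E_w$ is unique, since it is already determined by $\varphi_{\mid I(H)}$. So the only role of $q$ is to force the coefficient series of that unique extension to lie in $K[x]_{\mathfrak M}$.

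One small correction: your closing remark that the construction bypasses \Cref{prop:Step1} does not distinguish the two approaches. The paper's proof of Step 3 does not use Step 1 either.
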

\begin{proof}
Assume $L_K(E)r+I(H)\not=L_K(E)$.
    The quotient $L_K(E)/I(H)$ is isomorphic to $K[x,x^{-1}]$, where $x$ corresponds to $c+I(H)$. Since $I(H)$ is properly contained in $L_K(E)r+I(H)$, the quotient $(L_K(E)r+I(H))/I(H)$ is a non-zero proper ideal $\langle p(x)\rangle$
    of $K[x,x^{-1}]$ with $p(x)=p_\ell x^\ell+\cdots+p_1x+1$, $p_\ell\not=0$. Therefore
    \[L_K(E)r+I(H)=L_K(E)(p_\ell c^\ell +\cdots+p_1c+v)+I(H).\]
    Consider 
    \[
    z:=p_\ell c^\ell +\cdots+p_1c+v+\sum_{u\in H}u=p_\ell c^\ell +\cdots+p_1c+1_{L_K(E)}.
    \]
    It is clear that $L_K(E)z$ is contained in $L_K(E)(p_\ell c^\ell +\cdots+p_1c+v)+I(H)$. 
Let us prove $L_K(E)z=L_K(E)(p_\ell c^\ell +\cdots+p_1c+v)+I(H)$. Indeed, first $p_\ell c^\ell +\cdots+p_1c+v=v\cdot z\in L_K(E)z$. Second by \cite[Proposition 3.4]{AMT26} it is
\[I(H)=\bigoplus_{\mu\in H\cup F_E(H)}L_K(E)\mu^*.\]
For each $u\in H$, one has $u=uz\in L_K(E)z$. If $\mu\in F_E(H)$, then $\mu=c^jd_i$ with $j\geq 0$, and $d_i$, $1\leq i\leq n$, exit of $c$. Let us check by induction on $j\geq 0$ that $d_i^*(c^*)^j\in L_K(E)z$. For $j=0$
\begin{align*}
    d_i^*&=d_i^*(p_\ell c^\ell +\cdots+p_1c+1_{L_K(E)})=d_i^*z\in L_K(E)z
\end{align*}
For $0<j\leq \ell $,
\begin{align*}
    d_i^*(c^*)^jz&=d_i^*(c^*)^j(p_\ell c^\ell +\cdots+p_1c+1_{L_K(E)})\\
    &=p_jd_i^*+p_{j-1}d_i^*c^*+\cdots+p_1 d_i^*(c^*)^{j-1}+d_i^*(c^*)^j.
\end{align*}
Then by inductive hypothesis 
\[
d_i^*(c^*)^j=d_i^*(c^*)^jz
-(p_jd_i^*+p_{j-1}d_i^*c^*+\cdots+p_1 d_i^*(c^*)^{j-1})\in L_K(E)z.
\]
Finally, for $j>\ell $
\begin{align*}
    d_i^*(c^*)^jz&=d_i^*(c^*)^j(p_\ell c^\ell +\cdots+p_1c+1_{L_K(E)})\\
    &=p_\ell d_i^*(c^*)^{j-\ell }+\cdots+p_1d_i^*(c^*)^{j-1}+d_i^*(c^*)^j.
\end{align*}
Then by inductive hypothesis 
\[
d_i^*(c^*)^j=d_i^*(c^*)^jz-(p_\ell d_i^*(c^*)^{j-\ell }+\cdots+p_1d_i^*(c^*)^{j-1})\in L_K(E)z.
\]
Let $A(x)\in K[x]_{\mathfrak M}$ the inverse of $p(x)$.
Then setting
\[\hat\varphi(1)=(A(c)+\sum_{u\in H}u)\varphi(z)\in D(L_K(E)w)\]
we define an homomorphism of left $L_K(E)$-modules $\hat\varphi:L_K(E)\to D(L_K(E)w)$ extending $\varphi$.
\end{proof}

Combining the divisibility criterion \Cref{Steps} with the above results, we conclude that $D(L_K(E)w)$ is a divisible left $L_K(E)$-module. As observed above, $D(L_K(E)w)$ is not injective, and hence this construction yields an example of a divisible non-injective left $L_K(E)$-module.

\end{document}